\documentclass[12pt]{amsart}

\usepackage{amsmath,amsthm,amsfonts,amscd,amssymb,amsopn,enumerate}
\usepackage{eucal}
\usepackage{mathrsfs,color,hyperref,yhmath}
\usepackage{wasysym}
\usepackage[all]{xy}
\usepackage{fullpage}
\usepackage{tikz-cd}

\newcommand{\mbb}[1]{\mathbb #1}

\newcommand{\mc}[1]{\mathcal #1}

\newcommand{\wh}{\widehat}
\newcommand{\Spec}{\operatorname{Spec}}

\newcommand{\customdiagram}[4]{
\fbox{\xymatrix @C=.33cm @R=-0.3cm {
 & {#2} \ar@/^0.4pc/[dr] \\
{#1} \ar@/^0.4pc/[ur] \ar@/_0.4pc/[dr] & & {#4} \\
 & {#3} \ar@/_0.4pc/[ur]
}}
}

\newcommand{\vardiagram}[2]{
\fbox{\xymatrix @C=.33cm @R=-0.3cm {
 & {#2}_1 \ar@/^0.4pc/[dr] \\
{#2}_0 \ar@/^0.4pc/[ur] \ar@/_0.4pc/[dr] & & {#1} \\
 & {#2}_2 \ar@/_0.4pc/[ur]
}}
}

\newcommand{\mvardiagram}[2]{
\mbox{\xymatrix @C=.33cm @R=-0.3cm {
 & {#2}_1 \ar@/^0.4pc/[dr] \\
{#2}_0 \ar@/^0.4pc/[ur] \ar@/_0.4pc/[dr] & & {#1} \\
 & {#2}_2 \ar@/_0.4pc/[ur]
}}
}

\newcommand{\varPdiagram}[2]{
\fbox{\xymatrix @C=.33cm @R=-0.3cm {
 & {#2}_\UU \ar@/^0.4pc/[dr] \\
{#2}_\BB \ar@/^0.4pc/[ur] \ar@/_0.4pc/[dr] & & {#1} \\
 & {#2}_\PP \ar@/_0.4pc/[ur]
}}
}

\newcommand{\antidiagram}[1]{
\fbox{\xymatrix @C=.33cm @R=-0.3cm {
 & {#1}_1 \ar@/_0.4pc/[dl] \\
{#1}_0 & & {#1} \ar@/_0.4pc/[ul] \ar@/^0.4pc/[dl] \\
 & {#1}_2 \ar@/^0.4pc/[ul]
}}
}

\newcommand{\coc}{\oper{coc}}
\newcommand{\Inn}{\oper{Inn}}

\theoremstyle{plain}
\newtheorem{thm}{Theorem}[section]

\newtheorem{lemma}[thm]{Lemma}

\newtheorem{prop}[thm]{Proposition}

\newtheorem*{thm*}{Theorem}
\newtheorem*{rem*}{Remark}
\newtheorem*{lem*}{Lemma}

\newtheorem*{cor*}{Corollary}
\newtheorem*{prop*}{Proposition}

\theoremstyle{definition}

\theoremstyle{remark}
\newtheorem{rem}[thm]{Remark}

\newcommand{\sheaf}[1]{\mathscr{#1}}

\newcommand{\PP}{\sheaf{P}}
\newcommand{\UU}{\sheaf{U}}
\newcommand{\BB}{\sheaf{B}}

\newcommand{\oper}[1]{\operatorname{#1}}

\newcommand{\cha}{\oper{char}}

\usepackage[OT2,T1]{fontenc}

\def\<{\left<}
\def\>{\right>}

\DeclareSymbolFont{cyrletters}{OT2}{wncyr}{m}{n}
\DeclareMathSymbol{\Sha}{\mathalpha}{cyrletters}{"58}
\title{Local-global principles for the existence of Levi factors}
\author{David Harbater, Julia Hartmann, and George McNinch}
\date{\today}
\subjclass{20G07, 20G15, 14G12}
\begin{document}
\maketitle

\begin{abstract}
We discuss local-global principles for the existence of Levi factors (i.e., complements to the unipotent radical) for linear algebraic groups over one-variable function fields. We give examples of disconnected groups that fail the local-global principle, and prove a strong local-global principle in the presence of Levi descent.
\end{abstract}

\section{Introduction}
If $F$ is a field of characteristic zero, the canonical homomorphism from a linear algebraic group $G$ over $F$ to the quotient $G/R_u(G)$ by its unipotent radical admits a splitting (e.g., see \cite[Section~3.1]{McNinch10}). Every such splitting gives rise to a {\em Levi factor} of $G$, i.e., an $F$-subgroup that maps isomorphically to $G/R_u(G)$. If $F$ has positive characteristic, the situation is more complicated. First: The unipotent radical (over the algebraic closure) might not be defined over $F$. Second: If it is defined over $F$, the group $G$ still might not admit a Levi factor. In fact, the latter can happen even when $F$ is algebraically closed (see, e.g., \cite[Proposition~A.6.4]{CGP}; another source of examples comes from reductive groups $M$ with $H^2(M,V)$ nonzero for some linear representation~$V$). 

The existence (or nonexistence) and conjugacy of Levi factors was studied for example in \cite{Humphreys}, \cite{McNinch10}, \cite{McNinch13}, and \cite{McNinch} (see also the references there).

In this note we study local-global principles for the existence of Levi factors.  Generally speaking, a local-global principle asserts that a given algebraic property holds over a field $F$ provided that it holds over a set of overfields, such as the completions of $F$ at all discrete valuations.  Classical examples of local-global principles over global fields (in particular, number fields) include the Hasse-Minkowski theorem concerning isotropy of quadratic forms, and the theorem of Albert-Brauer-Hasse-Noether concerning the splitting of central simple algebras.  Here, we investigate the validity of local-global principles for the existence of Levi factors for linear algebraic groups over one-variable function fields, assuming that the unipotent radical is defined over the function field.  That is, we ask the following question: If a linear algebraic group~$G$ over such a function field $F$ (with $R_u(G)$ defined over $F$) admits a Levi factor over each completion $F_v$ at a discrete valuation $v$ on $F$, does it admit a Levi factor over~$F$? We show that the answer in general is negative (Proposition~\ref{example}, Proposition~\ref{elliptic}), but that a strong local-global principle does hold in the presence of Levi descent (Theorem~\ref{LGP}).

\section{Preliminaries}

By a {\em linear algebraic group} we mean a smooth affine group scheme of finite type over a field~$F$; any such group  embeds as a closed subgroup of $\operatorname{GL}_{n}$ for a suitable $n$ (\cite{Springer}, Theorem~2.3.7). An element $g$ is called {\em unipotent} if $g-1$ is nilpotent (viewed as an endomorphism of $F^n$; this is independent of the embedding). A linear algebraic group is {\em unipotent} if every element of~$G$ defined over a (chosen) algebraic
  closure of~$F$ is unipotent. Equivalently, $G$ is $F$-isomorphic to a closed subgroup of the upper triangular unipotent $n\times n$ matrices for some~$n$. Given a linear algebraic group $G$ over an algebraically closed field~$F$, its {\em unipotent radical} $R_u(G)$ is the largest connected normal unipotent subgroup. A linear algebraic group $G$ over an algebraically closed field~$F$ is called {\em reductive} if $R_u(G)$ is trivial. The subgroup $R_u(G)$ can also be characterized as the smallest connected normal subgroup so that $G/R_u(G)$ is reductive. If $G$ is defined over a field $F$ with algebraic closure $\bar F$, we say the unipotent radical of $G$ is {\em defined over $F$} if the largest connected normal unipotent $F$-subgroup $U$ of $G$ induces $R_u(G_{\bar F})$; i.e., $U_{\bar F}=R_u(G_{\bar F})$, where $U_{\bar F}$ and $G_{\bar F}$ denote the base change of $U$ and $G$ to $\bar F$, respectively.  The unipotent radical is defined over $F$ exactly when  $G/U$ is reductive; and in this case we will write $R_u(G)$ for $U$.  By \cite[12.1.7(d)]{Springer}, the unipotent radical is always defined over $F$ when $F$ is perfect. 

Now let $G$ be a linear algebraic group over a field~$F$ such that its unipotent radical is defined over~$F$. We then have a short exact sequence
$$1\rightarrow R_u(G)\rightarrow G\overset{\pi}{\rightarrow} G/R_u(G)\rightarrow 1, $$
and it is natural to ask whether this sequence admits a splitting. 

A splitting to this sequence furnishes a reductive subgroup $M$ of $G$ which maps isomorphically to $G/R_u(G)$ under $\pi$. Any such subgroup is called a {\em Levi subgroup} or a {\em Levi factor}. In characteristic zero, Mostow (\cite{Mostow}, Theorem~7.1) proved that Levi subgroups exist and that any two of them are conjugate by an element of the unipotent radical, using Lie algebra methods. In positive characteristic, assuming the unipotent radical of $G$ is defined over $F$, the group $G$ may still not have a Levi factor. Moreover, two Levi factors of $G$ might not be (geometrically) conjugate. See \cite[Section~3]{McNinch10} for references and a discussion of known results. 

A different approach to these kinds of questions is taken in \cite{CGP}, where the authors define the unipotent radical of an $F$-group $G$ to be the largest connected normal unipotent $F$-subgroup (whether or not it induces the unipotent radical over $\bar F$). The quotient group may then not be reductive, and it is called {\em pseudo-reductive}. Both notions agree over an algebraically closed field. In particular, the example given in \cite[Proposition~A.6.4]{CGP} of a linear algebraic group over an algebraically closed field that does not admit a Levi factor provides an example in our context as well.

\section{A counterexample to the local-global principle}
The aim of this section is to give examples of a one-variable function field $F$ over a field $K$, and a linear algebraic group $G$ over $F$ that has no Levi factor over $F$, such that the base change $G_{F_v}$ has a Levi factor over the $v$-adic completion $F_v$ for every discrete valuation $v$ on $F$. Hence $G$ violates the local-global principle for the existence of Levi factors. If some such~$G$ exists, we say that the local-global principle for Levi factors over $F$ fails. As noted earlier, the field $F$ (or equivalently, $K$) is necessarily of positive characteristic.

The first example we provide here relies on a construction in \cite[Section~8]{McNinch}, over a field $F$ with $\cha(F)>2$.  There, for every element $f \in F$, a linear algebraic $F$-group $E_f$ is defined, given by $E_f = H \times_{\mbb G_a} \mu_t$, where $H$ is a certain nonabelian central extension of $V = \mbb G_a^2$ by $\mbb G_a$, and $\mu_t$ is a certain extension of $(\mbb Z/p\mbb Z)^2$ by $\mbb G_a$ that depends on the choice of $f$.  
(See \cite[p.~395]{McNinch} for details).  We write $G_f = E_{-f}$.  The following proposition is then proved there, at \cite[Proposition~8.1]{McNinch}:

\begin{prop}\label{failure-criterion}
With the above notation, let $P(Z)=Z^p-Z-f\in F[Z]$, and let $L/F$ be a finite extension of fields. Then the base change $(G_f)_L$ of $G_f$ to $L$ has a Levi factor if and only if $P$ has a root in $L$.
\end{prop}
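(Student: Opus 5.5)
Since the statement is quoted from \cite[Proposition~8.1]{McNinch}, the plan is to reconstruct that argument from the structure $G_f = E_{-f} = H\times_{\mbb G_a}\mu_t$. First determine the pieces. The group $H$ is a central extension of $V=\mbb G_a^2$ by $\mbb G_a$, so it is unipotent. The group $\mu_t$ is an extension of $(\mbb Z/p\mbb Z)^2$ by $\mbb G_a$. Hence the fibered product $G_f$ has identity component $H^0$ (connected unipotent), and $R_u(G_f)$ is this connected unipotent normal subgroup, defined over $F$. The quotient $G_f/R_u(G_f)$ is the finite constant group $(\mbb Z/p\mbb Z)^2$, which is reductive as an algebraic group since its identity component is trivial. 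So a Levi factor over $L$ is exactly a homomorphic section $s\colon(\mbb Z/p\mbb Z)^2\to (G_f)_L$, i.e.\ two commuting elements $g_1,g_2\in G_f(L)$ of order $p$ lifting the standard generators $e_1,e_2$.

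Next, analyze the lifts concretely. The fibre of $G_f(L)\to(\mbb Z/p\mbb Z)^2$ over $e_i$ is a torsor under $R_u(G_f)(L)$, and since $H^1(L,\mbb G_a)=0$ and the unipotent radical is split, every $e_i$ lifts to some $x_i\in G_f(L)$. The conditions to impose on lifts $g_i=x_iu_i$ are $g_i^p=1$ and $[g_1,g_2]=1$. Using the explicit description of $\mu_t$ (the extension class depends on $t$, which encodes $f$), compute the $p$-th power map on a coset $x_i\mbb G_a$. I expect $(x_i a)^p$ to equal $x_i^p$ times an additive $p$-polynomial in $a$, typically $a^p-a$ up to scaling. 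Then $g_i^p=1$ reduces to an Artin--Schreier type equation of the form $a^p-a = c_i$ with $c_i$ built from $f$.

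The commutator condition is where $H$ enters. Since $H$ is nonabelian with commutator pairing on $V$ valued in the central $\mbb G_a$, the commutator $[g_1,g_2]$ lies in the central $\mbb G_a$. Varying the $V$-components of the lifts changes it by the (nondegenerate, bilinear) commutator form, and this gives enough freedom to kill it. The fibered product over the common $\mbb G_a$ is what ties the central coordinate of $H$ to the $\mbb G_a$ in $\mu_t$. The upshot I would aim for is that all conditions together are solvable over $L$ if and only if $Z^p-Z-f$ has a root in $L$.

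Finally, both directions follow from that reduction. If $\alpha\in L$ satisfies $\alpha^p-\alpha=f$, use $\alpha$ to write down explicit commuting order-$p$ lifts, which gives a section. Conversely, given a section, read off its $\mbb G_a$-coordinates and show that some combination is a root of $P$. The main obstacle is the bookkeeping of the explicit group law: computing $p$-th powers in $\mu_t$ and interacting with the commutator pairing in $H$, where some sign choice is why $G_f=E_{-f}$ rather than $E_f$. I would first compute in the simplest case, namely $p$-th powers of $(x,a)$ in $\mu_t$, to confirm that the Artin--Schreier polynomial $Z^p-Z-f$ appears there.
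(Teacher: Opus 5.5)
The paper does not prove this statement at all: it imports it verbatim from \cite[Proposition~8.1]{McNinch}. Any proof therefore has to be carried out with the explicit group law of $E_{-f}$ given there. Your first reduction is sound in outline. If $G_f/G_f^0$ is the constant group $(\mathbb{Z}/p\mathbb{Z})^2$, a Levi factor over $L$ amounts to two commuting elements of order $p$ in $G_f(L)$ lifting the generators, and lifts exist because the unipotent radical is split. Everything after that, however, is a forecast rather than an argument. The identification of $G_f^0$ and of the component group is not derived from the structure maps of the fibered product. You never say where $f$ enters the group law. Neither direction of the equivalence is actually established (``I expect'', ``the upshot I would aim for'').

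More seriously, the mechanism you forecast cannot occur. Take any extension $1\to\mathbb{G}_a\to\mu_t\to(\mathbb{Z}/p\mathbb{Z})^2\to 1$. Conjugation on $\mathbb{G}_a$ factors through the component group, a $p$-group, and lands in $\operatorname{Aut}(\mathbb{G}_a)$, whose $\bar F$-points are the scalings $a\mapsto\lambda a$ with $\lambda\in\bar F^\times$. Since $\bar F^\times$ has no elements of order $p$ in characteristic $p$, this $\mathbb{G}_a$ is central in $\mu_t$. Hence for $x\in\mu_t(L)$ and $a,b\in\mathbb{G}_a(L)$ we get $(xa)^p=x^p\cdot(pa)=x^p$, and likewise $[x_1a,x_2b]=[x_1,x_2]$. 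So on the $\mu_t$-side, the $p$-th powers and the commutator of lifts are constants in $F$, independent of the choice of lift, and no equation $a^p-a=c_i$ can come out of the coset $x_i\mathbb{G}_a$.

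In your picture, the $p$-th power conditions only see the $\mu_t$-coordinate, and the commutator is absorbed freely by the $V$-components. The existence of a Levi factor would then be the same for every extension $L$, which contradicts the statement whenever $P$ has no root in $F$. So the Artin--Schreier equation must be produced by the $H$-coordinates of the lifts and by the specific structure maps defining $H\times_{\mathbb{G}_a}\mu_t$. That means the $p$-power map and commutator pairing of $H$, together with any action of the components on $H$, coupled to the $f$-dependent constants. Your outline never examines this, and you cannot treat the $V$-components as free parameters for killing the commutator without checking how they interact with the remaining conditions.

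To close the gap you need three things:
\begin{enumerate}
\item McNinch's explicit definitions of $H$, $\mu_t$ and the gluing;
\item an explicit section of $(G_f)_L\to(\mathbb{Z}/p\mathbb{Z})^2$ built from a root $\alpha\in L$ of $P$;
\item conversely, an explicit way to extract a root of $P$ in $L$ from the coordinates of an arbitrary section.
\end{enumerate}
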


Since the polynomial $P$ in the above proposition is an Artin-Schreier polynomial, if it has a root in some extension $E$ of $F$, then $P$ is already split in $E$ (the other roots are ${\mathbb F}_p$-translates). 
Hence in order to use the above proposition to find $G$ such that the local-global principle for the existence of Levi factors fails, we need to find a function field $F$ and an element $f\in F$ such that the field extension $L/F$ defined by the Artin-Schreier polynomial $P(Z)=Z^p-Z-f$ is not trivial, but $P$ defines the trivial extension over $F_v$ for every discrete valuation $v$ of $F$. The following lemma will be useful in identifying trivial (resp.\ nontrivial) Artin-Schreier extensions.

\begin{lemma}
Let $R$ be a complete local domain of characteristic $p>0$ with fraction field~$L$, and let $f\in L$. 
\begin{enumerate}
\item[(a)]\label{trivial} If $f$ is in the maximal ideal of $R$, the field extension of $L$ given by $Y^p-Y-f=0$ is trivial. 
\item[(b)]\label{nontrivial} If $f$ has valuation equal to ~$-1$, the field extension of $L$ given by $Y^p-Y-f=0$ is nontrivial. 
\end{enumerate}
\end{lemma}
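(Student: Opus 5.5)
For (a), I would use Hensel's lemma for the complete local ring $R$. Put $P(Y)=Y^p-Y-f\in R[Y]$ with $f\in\mathfrak m$. Modulo $\mathfrak m$ this becomes $\bar P(Y)=Y^p-Y$, and $0$ is a simple root of it, since $\bar P'(Y)=pY^{p-1}-1=-1$ is a unit. Hensel's lemma for complete local rings then lifts $0$ to a root $y\in\mathfrak m$ of $P$. If one prefers to avoid quoting Hensel's lemma in this generality, there is a concrete substitute. Let $y_0=0$ and $y_{n+1}=y_n^p-f$. By induction on $n$,
\[
y_{n+1}-y_n=(y_n-y_{n-1})^p\in\mathfrak m^{p^n}.
\]
This sequence is therefore Cauchy in the $\mathfrak m$-adic topology, and its limit $y$ satisfies $y=y^p-f$, i.e.\ $P(y)=0$. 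Since $P$ has a root in $L$, and $P$ splits over $L$ whenever it has one root there (the other roots are the translates $y+i$ with $i\in\mathbb F_p$, as noted above), the extension defined by $P$ is trivial.

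For (b), "valuation" only makes sense if $R$ is a discrete valuation ring, so I read the hypothesis as $R$ being a complete DVR with valuation $v$ and $v(f)=-1$. Suppose $y\in L$ satisfies $y^p-y=f$.
\begin{itemize}
\item If $v(y)\ge 0$, then $v(y^p-y)\ge 0$, which contradicts $v(f)=-1$.
\item If $v(y)<0$, then $v(y^p)=p\,v(y)<v(y)$. By the strict ultrametric inequality, $v(y^p-y)=p\,v(y)$, which is a multiple of $p$. It cannot equal $-1$ because $p\ge 2$.
\end{itemize}
So $P$ has no root in $L$. An Artin--Schreier polynomial with no root in $L$ is irreducible over $L$, so it defines a nontrivial extension (of degree $p$, in fact).

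The only step that needs any care is the convergence argument in (a), or equivalently checking that the Hensel setup applies to a complete local domain that is not a DVR. The identity $y_{n+1}-y_n=(y_n-y_{n-1})^p$ settles this, so I expect no real obstacle.
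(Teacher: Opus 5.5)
Your proof is correct and takes essentially the paper's approach. Your recursion $y_{n+1}=y_n^p-f$ with $y_0=0$ produces exactly the partial sums $-\sum_{i=0}^{n-1} f^{p^i}$ of the series $u=-\sum_{i\ge 0} f^{p^i}$ that the paper writes down directly, and your argument for (b) is the same valuation comparison used there ($v(u^p-u)\ge 0$ when $v(u)\ge 0$, and $v(u^p-u)=p\,v(u)$ when $v(u)<0$).
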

\begin{proof}
If $f$ is in the maximal ideal of $R$, then $u:=-\sum\limits_{i=0}^\infty f^{p^i}$ is a well defined element of the complete ring $R$. This element satisfies $u^p-u=f$, and so the extension is trivial, proving the first assertion. 
For the second part, let $u$ be a nonzero element of $L$. If $u$ has nonnegative valuation, then so does $u^p-u$. On the other hand, if $u$ has valuation $m<0$, then $u^p-u$ has valuation $mp$. In summary, no element of the form $u^p-u$ has valuation $-1$, and so $Y^p-Y-f=0$ has no solution in $L$.
\end{proof}

\begin{prop}\label{example}
Let $k$ be a field of characteristic $p>0$, let $K=k((t))$, and let $F/K(x)$ be the degree $p$ Galois extension defined by the Artin-Schreier equation $Y^p-Y=\frac{t}{x}+\frac{t}{x-1}$. Let $P:=Z^p-Z-\frac{t}{x}\in F[Z]$. Then $P$ does not have a root in $F$ but does have a root in $F_v$ for every discrete valuation $v$ of $F$. Hence the local-global principle for Levi factors over $F$ fails.
\end{prop}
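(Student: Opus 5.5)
The plan rests on the additivity of the Artin–Schreier map $\wp(z)=z^p-z$. For any field $E\supseteq F$, $P$ has a root in $E$ if and only if $\frac{t}{x}\in\wp(E)$. Let $y\in F$ be a root of $Y^p-Y=\frac{t}{x}+\frac{t}{x-1}$. Then $\frac{t}{x}\equiv -\frac{t}{x-1}\pmod{\wp(F)}$. So over any overfield $E$ of $F$, $P$ has a root in $E$ as soon as \emph{either} $\frac{t}{x}$ \emph{or} $-\frac{t}{x-1}$ lies in $\wp(E)$. This flexibility is what makes the local statement possible, while the global statement will fail.

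\emph{Global nontriviality.} Suppose $\frac{t}{x}\in\wp(F)$. By Artin–Schreier theory (the kernel of $K(x)/\wp(K(x))\to F/\wp(F)$ is spanned by the class defining $F$), we get
$$\frac{t}{x}-i\Bigl(\frac{t}{x}+\frac{t}{x-1}\Bigr)\in\wp(K(x))$$
for some $i\in\mathbb F_p$. If $i\neq 1$, this element has $x$-adic valuation $-1$, because $t\in K^\times$ and $\frac{t}{x-1}$ is an $x$-adic unit. If $i=1$, the element is $-\frac{t}{x-1}$, which has $(x-1)$-adic valuation $-1$. In either case, Lemma (b), applied in the completion $K[[x]]$ (resp.\ $K[[x-1]]$) of $K(x)$, gives a contradiction. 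Hence $P$ has no root in $F$.

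\emph{Local triviality.} Fix a discrete valuation $v$ on $F$ with complete valuation ring $\hat R_v$ and maximal ideal $\mathfrak m_v$. By Lemma (a), $\mathfrak m_v\subseteq\wp(F_v)$. It therefore suffices to show that one of $a_1=\frac{t}{x}$ or $a_2=-\frac{t}{x-1}$ is congruent modulo $\mathfrak m_v$ to some $\wp(c)$ with $c\in\hat R_v$. I would split into cases according to $v(t)$, $v(x)$, $v(x-1)$.
\begin{enumerate}
\item[(i)] If $v(a_1)>0$ or $v(a_2)>0$, we are done immediately.
\item[(ii)] Otherwise $v(t)\le v(x)$ and $v(t)\le v(x-1)$.
\begin{enumerate}
\item[$\bullet$] If $v(t)>0$, this forces both $x$ and $x-1$ into $\mathfrak m_v$, which is impossible.
\item[$\bullet$] If $v(x)>0$, then $x-1$ is a unit, and $a_2=-\frac{t}{x-1}\equiv t\pmod{\mathfrak m_v}$ whenever $t\in\hat R_v$. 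Since $t$ lies in the maximal ideal of $k[[t]]$, Lemma (a) gives $t=\wp(c)$ with $c=-\sum_i t^{p^i}\in k[[t]]\subseteq\hat R_v$. Hence $a_2\in\wp(c)+\mathfrak m_v\subseteq\wp(F_v)$.
\item[$\bullet$] The case $v(x-1)>0$ is symmetric, using $a_1\equiv t\pmod{\mathfrak m_v}$.
\end{enumerate}
\item[(iii)] The remaining situations are those where $x$ and $x-1$ are both units, or where $t$ is a unit or has negative valuation. Here I would aim to show that some $a_j$ (or $a_j$ minus a suitable $\wp$-correction built from $y$ and $t$) lies in $\mathfrak m_v$. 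For example, when $v$ is trivial on $K$ and $x,x-1$ are units, the images of $a_1,a_2$ in the residue field are $\frac{t}{\bar x}$ and $-\frac{t}{\bar x-1}$. One then wants to use that $\bar y$ exists in the residue field with $\wp(\bar y)=\bar a_1-\bar a_2$, together with Hensel's lemma for the separable polynomial $Z^p-Z-a$, which reduces the problem to solving $\wp(z)=\bar a_j$ in the residue field.
\end{enumerate}

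The main obstacle is (iii): controlling all discrete valuations, including those whose restriction to $K=k((t))$ is not the $t$-adic one (trivial, or ``exotic'' with $v(t)\le 0$). I would first try to reduce, via Hensel's lemma, to a residue-field question. In that question the key input is that $t$, and more generally the elements of the maximal ideal of $k[[t]]$, lie in $\wp(k[[t]])$. I would then check case by case that at least one of $\bar a_1,\bar a_2$ becomes an Artin–Schreier value there. Once this is done, Proposition~\ref{failure-criterion}, applied with $f=\frac{t}{x}$ and $L=F$ resp.\ $L=F_v$ (or a finite subextension over which the root is defined), yields the failure of the local-global principle.
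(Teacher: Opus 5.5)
Your global argument is correct, and it differs from the paper's. The paper shows that the extension cut out by $P$ embeds in $K((x-1))$ while $F$ does not. You instead use that the kernel of $K(x)/\wp(K(x))\to F/\wp(F)$ is $\mathbb{F}_p\cdot[\frac tx+\frac t{x-1}]$, then apply Lemma (b) at $x=0$ (for $i\neq 1$) or at $x=1$ (for $i=1$). This avoids the two-dimensional ring $k[[t,x-1]]$. Your final step via a finite subextension $F(z)\subseteq F_v$ is also fine.

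The gap is in the local half. Step (iii) is only a plan, and that is where the content lies. Two ingredients are missing.

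First, you never exclude discrete valuations with $v(t)<0$: case (ii) only covers $t\in\hat R_v$, and you defer the rest to (iii). What is needed is that every discrete valuation $v$ of $F$ satisfies $k[[t]]\subseteq R_v$. The paper imports this, together with the rest of the local analysis, from \cite[Proposition~7.4]{HHK:H1}. That result gives a point $Q$ on the closed fiber of $\mathbb{P}^1_{k[[t]]}$ with $K(x)_Q\subseteq F_v$, and then only $Q=(x=0)$ needs the $y-w$ trick. The fact can also be checked directly:
\begin{enumerate}
\item[(1)] By Hensel's lemma, each element of $1+tk[[t]]$ is an $n$-th power for every $n$ prime to $p$. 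Hence $v(1+tk[[t]])=0$, and so $v\ge 0$ on $tk[[t]]$.
\item[(2)] Since $c^nt\in tk[[t]]$ for all $c\in k^\times$ and $n\in\mathbb{Z}$, also $v(k^\times)=0$.
\item[(3)] Therefore $v(t)\ge 0$, and $v(t)=0$ forces $v|_K$ to be trivial.
\end{enumerate}

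Second, (i) and (ii) then leave one case open: $v$ trivial on $K$, with $x$ and $x-1$ units. Here the input you name, $tk[[t]]\subseteq\wp(k[[t]])$, does not suffice. The restriction of $v$ to $K(x)$ is the valuation at a closed point of $\mathbb{P}^1_K$, so $\bar x$ is only algebraic over $K$, and $t/\bar x$ and $t/(\bar x-1)$ need not lie in $K$. The following argument works:
\begin{enumerate}
\item[(1)] The residue field of $v$ contains $K'=K(\bar x)$. This is a finite extension of $k((t))$, hence complete discretely valued with $t$ in its maximal ideal.
\item[(2)] Since $\bar x-(\bar x-1)=1$, one of $\bar x,\bar x-1$ has nonpositive valuation in $K'$. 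So one of $\bar a_1,\bar a_2$ lies in the maximal ideal of the valuation ring of $K'$.
\item[(3)] By Lemma (a), that element is an Artin--Schreier value in $K'$.
\item[(4)] Hensel's lemma for $Z^p-Z-a_j$ (whose derivative is $-1$) lifts this to $\hat R_v$.
\item[(5)] Your congruence $a_1\equiv a_2 \pmod{\wp(F)}$ then finishes the case.
\end{enumerate}
Without these two points the local assertion, which is the heart of the counterexample, is not proved.
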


\begin{proof}
Lemma~\ref{nontrivial}(b) applied to $R=K[[x]]$ shows that $F/K(x)$ is in fact a degree $p$ extension. Similarly, $P$ defines a degree $p$ extension $E$ of $K(x)$. For the first assertion of the proposition, suppose for contradiction that $P$ has a root in $F$.  Then $E \subseteq F$, and this containment is an equality since both extensions $E$, $F$ have degree $p$ over $K(x)$.   
Lemma~\ref{trivial}(a) with $R=k[[t,x-1]]$ and $f=\frac{t}{x}$ implies that 
$E$ is contained in the fraction field $k((t,x-1))$ of $k[[t,x-1]]$, and hence it is contained in $k((t))((x-1)) = K((x-1))$.  
On the other hand, Lemma~\ref{nontrivial}(b), with $R=K[[x-1]]$ and $f=\frac{t}{x}+\frac{t}{x-1}$, shows that the compositum $FK((x-1))$ (in any common overfield) is a degree $p$ extension of $K((x-1))$.   Thus $F$ is not contained in $K((x-1))$.  Since $E$ is contained in $K((x-1))$ and $E=F$, this is a contradiction; hence $P$ has no root in $F$.

For the second assertion, let $v$ be a discrete valuation on $F$. Then $v_0:=v|_{K(x)}$ defines a discrete valuation on $K(x)$. By \cite{HHK:H1}, Proposition~7.4, there is a point $Q\in {\mathbb P}^1_k$ for which the fraction field $K(x)_Q$ of the complete local ring $\wh R_Q$ of ${\mathbb P}^1_{k[[t]]}$ at $Q$ is contained in $K(x)_{v_0}$ and hence in $F_v$. If $Q$ is not the point $x=0$ on ${\mathbb P}^1_k$, then $P$ defines the trivial extension of $K(x)_Q$ by Lemma~\ref{trivial}(a), and hence defines the trivial extension of $F_v$. If $Q$ is the point $x=0$, then $\wh R_Q=k[[t,x]]$. In its fraction field $K(x)_Q$, there is a root $w$ of $W^p-W=\frac{t}{x-1}$, again by Lemma~\ref{trivial}(a). Therefore, there is also a root $w$ in $F_v$. Meanwhile, $F_v$ also contains a root $y$ of $Y^p-Y=\frac{t}{x}+\frac{t}{x-1}$ (by definition of $F$). Let $z:=y-w$. Then $z^p-z=\frac{t}{x}$. Hence the extension is trivial over $F_v$.  

Consequently, $G_{\frac{t}{x}}$ violates the local-global principle for the existence of Levi factors, by 
Proposition~\ref{failure-criterion}.  Thus the assertion follows.
\end{proof}

\begin{rem}
\begin{enumerate}
\item Every complete discretely valued field $K$ of characteristic $p>0$ is of the form $k((t))$ as above, e.g. by the Cohen Structure Theorem (see \cite[Theorem~15]{Cohen}). 
\item By using \cite[Proposition~1.4]{comparison} (again applied to ${\mathbb P}^1_{k[[t]]}$) instead of \cite[Proposition~7.4]{HHK:H1} in the above proof, one finds that the conclusion of the above proposition remains true even if we consider the larger set $\Omega_F$ of all valuations on $F$ whose valuation ring contains $k[[t]]$. (Note that the set $\Omega_F$ contains all the discrete valuations on $F$ by \cite[Corollary~7.2]{HHK:H1}.)
\end{enumerate}
\end{rem}

Proposition~\ref{failure-criterion} above can also be used to produce other counterexamples. 

\begin{prop}\label{elliptic}
Let $K$ be an algebraically closed field of characteristic $p>0$.  Then for all but finitely many elliptic curves $C$ over $K$ (up to isomorphism), there is a nontrivial Artin-Schreier  extension of the function field $F$ of $C$ that splits over each completion of $F$.  As a consequence, the local-global principle for Levi factors over $F$ fails.
\end{prop}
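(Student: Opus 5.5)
The plan is to exploit the fact that, since $K$ is algebraically closed, every completion $F_v$ of the function field $F=K(C)$ at a discrete valuation is $K((s))$ for a uniformizer $s$ at a closed point of $C$. (Every discrete valuation on $F$ trivial on $K$ comes from a closed point, and valuations nontrivial on $K$ do not arise since $K$ is algebraically closed, hence has no nontrivial discrete valuation.) By Lemma (a) together with the observation that $K$ is algebraically closed, so $Z^p-Z-c$ has a root in $K$ for every $c\in K$, an Artin--Schreier extension given by $f\in F$ splits over $F_v$ whenever $f$ is regular at the corresponding point. More generally, it splits whenever $f$ is congruent, modulo elements of the form $u^p-u$ with $u\in F_v$, to an element of $K[[s]]$. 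So I would look for $f\in F$ with $f\notin \wp(F):=\{u^p-u : u\in F\}$ but $f\in \wp(F_v)+K[[s]]=\wp(F_v)$ for all $v$. Equivalently, I want a class in $H^1(F,\mathbb Z/p)$ that is everywhere locally trivial, i.e., an unramified, everywhere split $\mathbb Z/p$-cover of $C$.

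The key step is to identify such classes with étale $\mathbb Z/p$-torsors over $C$. Since each point splits completely in such a cover, it is an everywhere-split étale cyclic cover. Over an algebraically closed field, every closed point of an étale cover splits automatically, because residue fields are all $K$. Hence I need only a nontrivial connected étale $\mathbb Z/p$-cover of $C$. Such covers correspond to $H^1_{et}(C,\mathbb Z/p)\cong \operatorname{Hom}$-dual of the $p$-torsion of the Jacobian, i.e.\ $C[p](K)$ for the elliptic curve $C$. This group is $\mathbb Z/p$ exactly when $C$ is ordinary and $0$ when $C$ is supersingular. There are only finitely many supersingular $j$-invariants in characteristic $p$, so for all but finitely many $C$ up to isomorphism there is a nontrivial étale cyclic degree $p$ cover $C'\to C$. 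Concretely, via Artin--Schreier theory, $H^1(C,\mathbb Z/p)$ is the kernel of $F-1$ on $H^1(C,\mathcal O_C)$, which is nonzero iff the Hasse invariant is nonzero.

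Then I would verify the conclusion. The function field extension $K(C')/F$ is a nontrivial Artin--Schreier extension $Z^p-Z-f$. At each closed point $Q$ of $C$, the cover is étale, so the decomposition group at $Q$ is isomorphic to a subgroup of the Galois group of the residue field extension, which is trivial. Thus $Q$ splits into $p$ points and $F_v\otimes_F K(C')\cong F_v^p$, i.e.\ $P$ has a root in $F_v$. Proposition~\ref{failure-criterion} applied to $G_f$, with $L=F$ and $L=F_v$, then gives the failure of the local-global principle. There is one subtlety: Proposition~\ref{failure-criterion} is stated for finite extensions $L/F$, while $F_v$ is not finite over $F$. I would handle this by replacing $F_v$ with the algebraic closure of $F$ in $F_v$ (the henselization), which contains a root of $P$ since $P$ is separable and splits in $F_v$. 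A root there lies in a finite extension $L\subset F_v$, so $G_f$ has a Levi factor over $L$ and hence over $F_v$ by base change.

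I expect the main obstacle to be making the identification between étale $\mathbb Z/p$-covers and ordinariness precise and self-contained. The cleanest route is the exact sequence $0\to\mathbb Z/p\to\mathcal O_C\xrightarrow{F-1}\mathcal O_C\to 0$ on $C_{et}$, which gives $H^1_{et}(C,\mathbb Z/p)\cong H^1(C,\mathcal O_C)^{F=1}$. One then uses that Frobenius acts on the one-dimensional space $H^1(C,\mathcal O_C)$ by the Hasse invariant and that $K$ is algebraically closed, so a nonzero $p^{-1}$-linear map has fixed vectors. The finiteness of supersingular $j$-invariants I would quote as standard, from Deuring or Silverman V.4.1.
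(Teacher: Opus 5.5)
Your proposal is correct and follows essentially the same route as the paper: a nonzero Hasse invariant (true for all but finitely many $j$-invariants) yields a connected \'etale $\mathbb{Z}/p\mathbb{Z}$-cover $C'\to C$; every discrete valuation on $F$ is trivial on $K$ and comes from a $K$-point; \'etaleness forces each such point to split completely (the paper argues this via Hensel's lemma on complete local rings, you via trivial inertia and trivial residue extensions); and Proposition~\ref{failure-criterion} finishes. Your passage from $F_v$ to the finite extension of $F$ generated by a root inside $F_v$ fills in a step the paper leaves implicit, and your only slip is that Frobenius on $H^1(C,\mathcal O_C)$ is $p$-linear rather than $p^{-1}$-linear, which does not affect the existence of nonzero fixed vectors.
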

\begin{proof}
First recall that to each elliptic curve~$C$ one may associate a Hasse invariant, which is zero for only finitely many isomorphism classes of elliptic curves (\cite[Chapter IV, Corollary 4.23]{Hartshorne}). For all other elliptic curves, $\mbb Z/p\mbb Z$ is a quotient of the algebraic fundamental group by \cite[Chapter IV, Exercise~4.8]{Hartshorne}. 
That is, there is a connected Galois finite \'etale cover $C' \to C$ whose Galois group over $C$ is $\mbb Z/p\mbb Z$.  This cover defines a Galois extension $E$ of the function field $F$ of $C$, which is Artin-Schreier since $F$ has characteristic $p$.  This is given by a polynomial of the form $Y^p-Y=f$ that has no root in $F$, for some $f \in F$.  

Now consider any discrete valuation $v$ on $F$ over $K$.  Since $K$ is algebraically closed, $K^\times$ and hence $v(K^\times) \subseteq \mbb Z$ is divisible. Since $v$ is discrete, it must be trivial on~$K$. Thus $v$ corresponds to a closed point $Q$ on $C$ which is necessarily a $K$-point (see \cite[Chapter I, Corollaries~6.6, 6.10, 6.12]{Hartshorne}). Since the cover is finite \'etale, the fiber over $Q$ consists of exactly $p$ distinct $K$-points.  For each such point $Q' \in C'$ over $P \in C$, Hensel's Lemma then implies that the inclusion of complete local rings $\wh{\mc O}_{C,Q} \subseteq \wh{\mc O}_{C',Q'}$ is trivial.  Thus the polynomial $Y^p-Y=f$ has a root in the fraction field $F_v$ of $\wh{\mc O}_{C,Q}$; i.e., the extension splits over $F_v$.

Thus $G_f$ violates the local-global principle for the existence of Levi factors by Proposition~\ref{failure-criterion}, and the result follows.
\end{proof}

\begin{rem}
The above proposition can be generalized to curves of higher genus, since for every $g \ge 1$, there is a dense open subset of the moduli space of genus $g$ curves over $K$ such that the corresponding curve has a Galois finite \'etale cover of degree $p$, corresponding to an unramified Artin-Schreier extension of the function field of the curve.
\end{rem}

\section{A strong local-global principle via Levi descent}

In contrast to the counterexamples given in the previous section, below we provide a condition that implies not only that a local-global principle for the existence of Levi factors holds, but also that it holds in a very strong form.  This condition relates to the behavior of Levi factors under base change.

If $E/F$ is a field extension, and if $G$ is a linear algebraic group over $F$ that has a Levi factor~$M$, then the base change $G_E$ also has a Levi factor; viz., $M_E$.  The converse does not hold in general, but it is known to hold in certain situations.  

We say that a linear algebraic group $G$ over a field $F$ {\it satisfies Levi descent} (resp., {\it away from $\cha(F)$}) if it has the following property: For every field $E$ containing $F$ and every finite Galois extension $L/E$ (resp., of degree not divisible by $\cha(F)$), if $G_L$ has a Levi factor then so does $G_E$.  For connected $F$-groups $G$ for which $R_u(G)$ is defined over $F$, there are {\em no} known examples where Levi descent fails. Remark~\ref{descent_remark} below discusses some cases when Levi descent is known to hold.

Now let $K$ be a field, and let $F$ be a one-variable function field over~$K$. In the case of $F$-groups $G$ that satisfy Levi descent, we then have a strong local-global principle for the existence of a Levi factor: The group $G$ has a Levi factor if and only if $G_{F_v}$ does for {\it some} completion $F_v$ with respect to a divisorial discrete valuation~$v$ (i.e., a valuation whose valuation ring is the local ring of a point on some regular projective connected $K$-curve with function field~$F$). Namely, we have the following theorem.

\begin{thm}\label{LGP}
Let $K$ be a field, and let $F$ be 
a one-variable function field over~$K$.
Let $G$ be a linear algebraic group over $F$ whose unipotent radical is defined over~$F$, and assume that $G$ satisfies Levi descent.  

Then the following are equivalent:
\begin{enumerate} [(a)]
\item $G$ has a Levi factor.
\item $G_{F_v}$ has a Levi factor for every completion $F_v$ at a discrete valuation~$v$ of~$F$.
\item $G_{F_v}$ has a Levi factor for some completion $F_v$ at a divisorial discrete valuation~$v$ of~$F$. 
\end{enumerate}
\end{thm}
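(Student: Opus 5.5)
The implications (a)$\Rightarrow$(b) and (b)$\Rightarrow$(c) are immediate. If $M$ is a Levi factor of $G$, then $M_{F_v}$ is a Levi factor of $G_{F_v}$; here we use that $R_u(G)_{F_v}=R_u(G_{F_v})$, since $G_{F_v}/R_u(G)_{F_v}=(G/R_u(G))_{F_v}$ is still reductive. Divisorial discrete valuations exist, since $F$ is the function field of a regular projective $K$-curve. The content is therefore (c)$\Rightarrow$(a), and the plan is to use Levi descent twice: first to descend from a finite Galois extension of $F_v$ to a field that is algebraic over $F$, then to descend from a finite Galois extension of $F$ to $F$ itself.

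Fix a divisorial $v$ with $G_{F_v}$ having a Levi factor $M_v$. The main step is an approximation argument: a Levi factor, or a splitting of $G\to G/R_u(G)$, is cut out by finitely many polynomial equations. I would define over $F$ the scheme $X$ of sections of $\pi$; equivalently, the scheme of subgroups of $G$ complementary to $R_u(G)$. This scheme is of finite type over $F$, for instance via a Hom-scheme from the reductive group $G/R_u(G)$ into $G$ cut out by the condition $\pi\circ s=\mathrm{id}$. Then $X(F_v)\neq\emptyset$, and we want $X(F')\neq\emptyset$ for a finite extension $F'/F$.

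Let $F^h_v$ be the henselization of $F$ at $v$, i.e.\ the fraction field of the henselization of the local ring $\mathcal O_{C,Q}$ of the regular curve at the point $Q$. Since $v$ is divisorial, $\mathcal O_{C,Q}$ is excellent, so by Artin approximation (or, since $\mathcal O_{C,Q}$ is a DVR with excellent henselization, by the fact that $F^h_v$ is existentially closed in its completion $F_v$) we get $X(F^h_v)\neq\emptyset$. Now $F^h_v$ is a directed union of finite separable extensions of $F$, so the point is defined over some finite separable $F'\subseteq F^h_v$. Thus $G_{F'}$ has a Levi factor. This is the step I expect to be the main obstacle: making the representability of $X$ precise, or alternatively encoding ``has a Levi factor'' as the existence of finitely many coordinates satisfying polynomial identities, and justifying approximation. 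The divisoriality hypothesis is exactly what guarantees excellence here.

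Finally, let $L$ be the Galois closure of $F'$ over $F$, which is finite Galois since $F'/F$ is separable. Then $G_L$ has a Levi factor, obtained by base change of the one over $F'$. Applying Levi descent with $E=F$ and the finite Galois extension $L/F$ gives a Levi factor of $G$, which proves (a). In this form the argument needs Levi descent only once, with $E=F$. If the approximation step produced only a purely inseparable piece, I would instead use Levi descent with $E=F_v$ and Galois extensions of $F_v$ to reduce first, but I expect the separable henselization route to suffice.
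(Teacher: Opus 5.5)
Your proposal is correct and follows essentially the same route as the paper: Artin approximation for the excellent henselian discrete valuation ring $\mathcal{O}^h_{C,Q}$ turns an $F_v$-section of $\pi$ into an $F_v^h$-section, which is then defined over a finite separable extension of $F$, and passing to its Galois closure lets you apply Levi descent with $E=F$. The paper handles the step you flagged without any Hom-scheme. Since the comorphism $s_v^*$ involves only finitely many elements of $F_v$, it descends to a finitely generated $F_v^h$-subalgebra $A\subset F_v$, and Artin approximation is applied to a denominator-cleared presentation of $A$ over $\mathcal{O}^h_{C,Q}$. This also avoids your ``finite type'' claim, which is not automatic for Hom-schemes: they are in general only locally of finite type, though an affine open around your $F_v$-point would suffice.
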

\begin{proof}
Since the existence of a Levi factor is preserved under base change, and since (b) clearly implies~(c), we only need to show that (c) implies~(a). 
Let $U$ be the unipotent radical of $G$.
Let $v$ be the discrete valuation for which $G_{F_v}$ has a Levi factor. Since $v$ is divisorial, there exists a regular projective connected  $K$-curve~$C$ with function field $F$ and a point $Q$ of $C$ such that the completion $F_v$ is the fraction field of the complete local ring $\wh R_v:=\wh {\mc O}_{C,Q}$.  Consider the henselization $R_v^h := {\mc O}^h_{C,Q}$ of ${\mc O}_{C,Q}$, with $F_v^h$ its fraction field.  Thus $F_v$ is the completion of $F_v^h$.  
By \cite[Proposition~7.8.6]{EGA4}, $C$ is excellent, and so ${\mc O}_{C,Q}$ is an excellent discrete valuation ring. 

Let $\pi: G\rightarrow G/U$ be the canonical epimorphism to the quotient, with base change $\pi_v: G_{F_v}\rightarrow (G/U)_{F_v}\simeq G_{F_v}/U_{F_v}$. The existence of a Levi factor of $G_v$ is equivalent to the existence of a (homomorphic) section $s_v$ of $\pi_v$. The comorphism $s_v^*: F_v[G]\rightarrow F_v[G/U]=F_v^h[G/U]\otimes_{F_v^h} F_v$ is given by a finite amount of data, and so there is a finitely generated $F_v^h$-subalgebra $A \subset F_v$ 
to which $s_v^*$ descends.  The descended comorphism corresponds to a section $(G/U)_A \to G_A$
to the base change $\pi_A$ of $\pi$ to $A$ so that $s_A$ induces $s_v$.

Since the ring $A$ is a finitely generated algebra over the field $F_v^h$,
there is an isomorphism 
$\rho:F_v^h[X_1,\dots,X_r]/(f_1,\dots,f_n) \to A$ for some polynomials
$f_i \in F_v^h[X_1,\dots,X_r]$.  After multiplying each $X_i$ by a suitable power of the uniformizer of $\mc O_{C,Q}$, we may assume that each $f_j$ lies in $R_v^h[X_1,\dots,X_r]$ and that each $\bar x_i := \rho(X_i) \in A \subseteq F_v$ lies in $\wh R_v$.   
Let $\rho_0$ be the composition of 
$R_v^h[X_1,\dots,X_r]/(f_1,\dots,f_n)\to F_v^h[X_1,\dots,X_r]/(f_1,\dots,f_n)$ with the isomorphism $\rho$.
The kernel of $\rho_0$ is of the form $(f_1,\dots,f_n,f_{n+1},\dots,f_m)$ for some $m \ge n$ and some $f_{n+1},\dots,f_m \in R_v^h[X_1,\dots,X_r]$.  
After replacing $n$ by $m$, we may assume that $\rho_0$ is injective, 
defining an isomorphism $R_v^h[X_1,\dots,X_r]/(f_1,\dots,f_n) \to A_0$,
where $A_0$ is the image of $\rho_0$.
The elements $\bar x_1,\dots,\bar x_r \in A_0\subseteq \wh R_v$ 
satisfy the polynomials $f_j \in R_v^h[X_1,\dots,X_r]$, for $j=1,\dots,n$.  
Since $R_v^h$ is the henselization of the excellent discrete valuation ring ${\mc O}_{C,Q}$ and $\wh R_v$ is its completion, the Artin Approximation Theorem (see \cite[Theorem~1.10]{Art}) yields elements $x_1,\dots,x_r \in R_v^h$ that also satisfy those polynomials.  
By sending $X_i$ to $x_i$, we obtain an $R_v^h$-algebra homomorphism $R_v^h[X_1,\dots,X_r]/(f_1,\dots,f_n) \to R_v^h$.  Tensoring with $F_v^h$ yields an $F_v^h$-algebra homomorphism $F_v^h[X_1,\dots,X_r]/(f_1,\dots,f_n) \to F_v^h$, corresponding to an $F_v^h$-point on $\Spec(A)$.  By pushing forward with respect to this homomorphism (or equivalently, pulling back with respect to the scheme morphism $\Spec(F_v^h) \to \Spec(A)$), we obtain a morphism $s_v^h:(G/U)_{F_v^h}\rightarrow G_{F_v^h}$ which is a section of $\pi_{F_v^h}$ by functoriality
of pullback and since $s_A$ is a section of $\pi_A$.

The extension $F_v^h/F$ is separable algebraic. By an argument similar to the one above, the section $s_v^h$ is defined over a finitely generated $F$-subalgebra of $F_v^h$, and thus over a finite separable field extension $L$ of $F$.  That is, $G_L$ has a Levi factor. Replacing~$L$ by its Galois closure $E$ over $F$ and using that $G$ satisfies Levi descent then gives the desired result.
\end{proof}

We conclude this section with a summary of what is known about the condition of Levi descent.
\begin{rem}\label{descent_remark}
Suppose that $G$ is a linear algebraic group over a field $F$ such that the unipotent radical $U$ of $G$ is defined and split over $F$ (i.e., there is a sequence $1 = U_0 \subset U_1 \subset \cdots \subset U_m = U$ of closed connected normal $F$-subgroups of $U$ such that the successive quotients are each isomorphic to the additive group over $F$).  

Then $G$ satisfies Levi descent away from $\cha(F)$, by \cite[Theorem~3.4.3]{McNinch13}. Moreoever, $G$ satisfies Levi descent under each of the following hypotheses:

\begin{enumerate} [(i)]
\item (\cite[Theorem~1.6]{McNinch}) The invariant group scheme $U_\ell^{M_\ell}$ is trivial, and $H^1_{\coc}(M_\ell,U_\ell)=1$.

\item (\cite[Theorem~1.7]{McNinch}) $\Inn(U_\ell)^{M_\ell}$ is trivial, $H^1_{\coc}(M_\ell,\Inn(U_\ell))=1$, and $Z(U)$ is a vector group on which $G$ acts linearly.
\end{enumerate}

\noindent Here $Z(U)$ is the center of the unipotent radical $U \subseteq G$, and $H^1_{\coc}$ is the first cohomology set defined in \cite[Section~3]{McNinch}, where the allowable cochains are those that are morphisms of varieties.

\end{rem}

{\bf Acknowledgements.} The authors thank Florian Pop for helpful discussions.

\bigskip

\noindent {\bf Author information:}

\medskip

\noindent David Harbater, 
Department of Mathematics, University of Pennsylvania, Philadelphia, PA 19104-6395, USA\\
email: harbater@math.upenn.edu

\medskip

\noindent Julia Hartmann, 
Department of Mathematics, University of Pennsylvania, Philadelphia, PA 19104-6395, USA\\
email: hartmann@math.upenn.edu

\medskip

\noindent George McNinch,
Department of Mathematics, Tufts University, Medford, MA 02155, USA\\
email: george.mcninch@tufts.edu

\medskip

\noindent The authors were supported on DMS-2102987 and DMS-2402367 (DH and JH).

\end{document}